\newtheorem{theorem}{Theorem}
\newtheorem{corollary}[theorem]{Corollary}
\newtheorem{proposition}[theorem]{Proposition}
\theoremstyle{remark}
\let\leq\leqslant
\let\geq\geqslant
\let\setminus\smallsetminus
\let\epsilon\varepsilon
\newcommand{\floor}[1]{{\left\lfloor #1 \right\rfloor}}
\newcommand{\Nat}{\mathbb{N}}
\let\old@setaddresses\@setaddresses
\def\@setaddresses{\bgroup\parindent 0pt\let\scshape\relax\old@setaddresses\egroup}
\DeclareMathOperator{\ch}{ch}
\DeclareMathOperator{\olch}{pnt}
\begin{document}

\title{Chip games and paintability}

\author[L.~Duraj]{Lech Duraj}
\author[G.~Gutowski]{Grzegorz Gutowski}
\author[J.~Kozik]{Jakub Kozik}

\address{Theoretical Computer Science Department, Faculty of Mathematics and Computer Science, Jagiellonian University, Krak\'{o}w, Poland}
\email{\mtt\{duraj,gutowski,jkozik\mtt\}@tcs.uj.edu.pl}

\thanks{This research is supported by: Polish National Science Center UMO-2011/03/D/ST6/01370.}

\begin{abstract}
We prove that the difference between the paint number and the choice number of a complete bipartite graph $K_{N,N}$ is $\Theta(\log \log N)$.
That answers the question of Zhu (2009) whether this difference, for all graphs, can be bounded by a common constant.
By a classical correspondence, our result translates to the framework of on-line coloring of uniform hypergraphs. 
This way we obtain that for every on-line two coloring algorithm there exists a $k$-uniform hypergraph with $\Theta(2^k)$ edges on which the strategy fails.
The results are derived through an analysis of a natural family of chip games.
\end{abstract}

\maketitle

\section{Introduction}

We begin with a presentation of connections between paintability and on-line coloring of regular hypergraphs.
In Section~\ref{sec:chips} we show how to express a common problem in these areas using a natural family of simple two person chip games.
Main results, presented in terms of these chip games, follow in Section~\ref{sec:main}.
Throughout the paper $\log$ is the logarithm function to base $2$.

\subsection{Paintability}

The classical notion of choosability (list colorability) has been brought to the on-line setting by means of
  the on-line list coloring game introduced by Schauz~\cite{Schauz09} and Zhu~\cite{Zhu09}.
Given a finite graph $G=(V,E)$ and a function $f:V \to \Nat$, two players, Lister and Painter, play the \emph{on-line list coloring game} in the following way.
In the $i$-th round Lister presents a nonempty set of vertices $V_i \subset V \setminus \bigcup_{j=1}^{i-1} X_j$ and Painter chooses $X_i$ that is both a subset of $V_i$ and an independent set in $G$.
After $i$ rounds, vertices in $\bigcup_{j=1}^{i} X_j$ are \emph{colored}.
If a vertex $v$ belongs to exactly $l$ of the sets $V_1, \ldots, V_i$ we say that $v$ has $l$ \emph{permissible colors} after $i$ rounds.
Lister wins the game if after some round there exists an uncolored vertex $v$ with $f(v)$ permissible colors.
If this does not happen, then eventually all vertices are colored and Painter wins the game.
We say that $G$ is \emph{$f$-paintable} when Painter has a winning strategy.
Graph $G$ is \emph{$k$-paintable} if it is $f$-paintable for a constant function $f(x) = k$.
The smallest such $k$ is called the \emph{paint number} of $G$ and is denoted by $\olch(G)$.

Surprisingly many results proved for choosability turn out to be valid for paintability as well.
In particular, the paint number of any planar graph is at most 5~\cite{Schauz09}. 
Kernel method of Galvin generalizes in a straightforward way, hence, the paint number of the line graph of any bipartite graph $G$ equals the maximum degree in $G$.
By the result of Schauz~\cite{Schauz10}, proofs using Combinatorial Nullstellensatz are also valid in the on-line setting.
For example, if a graph $G$ admits an orientation in which the number of even Eulerian subgraphs differs from the number of odd Eulerian subgraphs and $f(v) \geq d^+(v)+1$, then G is $f$-paintable.

One of the most intriguing questions concerning paintability is the relation of the paint number to the choice number.
Clearly, for any graph $G$ we have $\ch(G) \leq \olch(G)$.
There are examples of graphs with $\ch(G) < \olch(G)$, but no example with $\ch(G)+1 < \olch(G)$ was presented so far.
The main result of this paper provides the first family of examples with an arbitrarily large difference between the paint number and the choice number.
We achieve this by providing the following tight estimate of the paint number of a complete bipartite graph $K_{N,N}$.

\begin{theorem}
	\[
    \olch(K_{N,N}) = \log N + O(1)\textrm{.}
	\]
\end{theorem}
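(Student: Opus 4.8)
The plan is to establish the two bounds $\log N-O(1)\le\olch(K_{N,N})\le\log N+O(1)$ separately. Only the upper bound would follow from standard considerations; the lower bound is genuinely new, since $\ch(K_{N,N})\le\olch(K_{N,N})$ gives at best $\log N-\Theta(\log\log N)$, so a direct strategy for Lister is needed.

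For the upper bound I would fix $k=\lceil\log N\rceil+2$ and describe a Painter strategy driven by a weight function. Write $A,B$ for the two sides of $K_{N,N}$, and for an uncolored vertex $v$ after round $i$ let $r(v)=k-\norm{\set{j\le i:v\in V_j}}$ be the number of permissible colors it still lacks, so that Lister wins exactly when some uncolored vertex reaches $r(v)\le 0$. Put $\Phi=\sum_{v\text{ uncolored}}2^{-r(v)}$. When Lister presents $V_i$, Painter colors all of $V_i\cap A$ or all of $V_i\cap B$ (each is independent in $K_{N,N}$), choosing whichever of the two weights $\sum_{v\in V_i\cap A}2^{-r(v)}$, $\sum_{v\in V_i\cap B}2^{-r(v)}$ is larger. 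A one-line computation gives $\Delta\Phi=-\bigl|\sum_{v\in V_i\cap A}2^{-r(v)}-\sum_{v\in V_i\cap B}2^{-r(v)}\bigr|\le 0$, so $\Phi$ never exceeds its initial value $2N\cdot 2^{-k}\le\tfrac12<1$; since any uncolored vertex with $r(v)\le 0$ would contribute at least $1$ to $\Phi$, Lister never wins. This part is routine.

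For the lower bound I would pass to the two-person chip game of Section~\ref{sec:chips}: the Lister--Painter game on $K_{N,N}$ with $f\equiv k$ is equivalent to a chip game with $N$ chips at level $k$ on each of two stacks, in which Pusher (Lister) picks a nonempty batch of chips from each stack, Remover (Painter) deletes one of the two batches and moves every chip of the other batch one level down, and Pusher wins once a chip reaches level $0$ (one first records the routine monotonicity fact that Painter never gains by coloring a proper subset of a side, so this reduction is faithful). It then suffices to give a Pusher strategy that wins whenever $N\ge c\,2^{k}$ for an absolute constant $c$, as this yields $\olch(K_{N,N})\ge k+1$ for every $k$ with $N\ge c\,2^{k}$, i.e. $\olch(K_{N,N})\ge\log N-O(1)$. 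The idea is to keep the two stacks balanced with respect to the same weight $\sum_{\text{chips}}2^{-(\text{level})}$: Pusher always plays two batches of equal weight, so the weight is preserved; since each round deletes at least one chip the game terminates, and if no chip ever reached level $0$ the final weight would be $0$ rather than the initial $2N\cdot 2^{-k}>0$. To keep producing equal-weight batches after Remover breaks the symmetry of the stacks by pushing one chip down on a single side, Pusher runs a re-balancing subroutine: it matches geometrically growing batches of the surviving high-level chips on the other stack against that single low chip, forcing Remover either to let the low chip descend all the way to level $0$ or to restore a matched supply of chips one level lower on both stacks.

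The hard part will be this last point: organizing Pusher's play so that the re-balancing never exhausts the stacks while consuming only $\O{2^{k}}$ chips per stack. A naive implementation leaks a super-constant, in fact $2^{\Theta(k)}$, factor per level and would give only $\olch(K_{N,N})\ge(\tfrac12+o(1))\log N$; making the doubling recursion $g(k)\le 2g(k-1)+\O{1}$ tight, so that $g(k)=\Theta(2^{k})$, is the technical core of Section~\ref{sec:main}. The two weight-function arguments that frame it are the easy part.
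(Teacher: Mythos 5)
Your framing is right where it is routine, and missing where it matters. The upper bound argument is correct: colouring the heavier of the two sides with respect to the weights $2^{-r(v)}$ is the standard potential argument (it is the same computation that shows Remover wins the $(k,N)$ chip game for $N<2^{k-1}$, i.e.\ $m^{OL}(k)\geq 2^{k-1}$, which the paper treats as known), and your reduction of the lower bound to a Pusher strategy in the $(k,N)$ chip game is exactly the translation of Section~\ref{sec:chips}, with the correct target: Pusher should win whenever $N\geq c\cdot 2^{k}$.

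The genuine gap is that this Pusher strategy is the theorem, and you do not supply it — you even state that your implementation of the re-balancing loses a $2^{\Theta(k)}$ factor and would only give $(\tfrac12+o(1))\log N$, which is weaker than the bound $\olch(K_{N,N})\geq\ch(K_{N,N})=\log N-O(\log\log N)$ that one gets for free, and in particular useless for the additive-$O(1)$ statement (and for the corollary on $\olch-\ch$, which is the point of the result). The specific unproved step is availability: the argument ``Pusher always plays two batches of equal weight, total weight is conserved, the game terminates, hence some chip reaches level $0$'' is sound only if such nonempty batches can be produced in every round, and after Remover's answers the weight can drift entirely onto one path or become unusable there; controlling this is the whole difficulty. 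The paper resolves it not by per-round weight conservation but by a phase structure with three invariants restored at the end of every phase: each path carries total weight exactly $8$; chips on each path occupy at most two consecutive vertices, which is what guarantees that a draw of any prescribed weight ``from the top'' exists; and the two path distances $8h+W$ differ by at most $8$. Within a phase Pusher nominates a running chip and draws geometrically growing weights ($8\omega,4\cdot 2\omega,4\cdot 4\omega,\dots$ in one case, $3\cdot 2^{i-1}\omega$ and $2^{i-1}\omega$ in the other), and a case analysis of Remover's first response shows the invariants survive with only constant weight per path consumed — that is what caps $N$ at $8\cdot 2^{k}$ rather than leaking a factor per level. None of this bookkeeping (in particular nothing playing the role of the Consecutive Vertices Invariant) is present in your proposal, so the lower bound remains unproved.
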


It is commonly known that $\ch(K_{N,N}) = (1+o(1)) \log N$.
To observe a difference between $\ch(K_{N,N})$ and $\olch(K_{N,N})$ we need a more precise estimate.
Erd\H{o}s, Rubin and Taylor~\cite{ERT80} observed that $\ch(K_{N,N})$ is closely related to another well studied parameter.
Let $m(k)$ be the minimum number of edges in a $k$-uniform hypergraph that is not $2$-colorable.
Easy encoding (see~\cite{ERT80}) shows that the minimum $N$, for which $\ch(K_{N,N}) \geq k$, satisfies $m(k) \leq N \leq 2 \cdot m(k)$.
The best known lower bound for $m(k)$, proved by Radhakrishnan and Srinivasan~\cite{RS00}, is $m(k) = \Omega (\sqrt{\frac{k}{\log k}} \cdot 2^{k} )$.
As a consequence, we get $\ch(K_{N,N}) \leq \log N - (\frac{1}{2} +o(1))\log \log N$ and the following corollary.

\begin{corollary}
\[
  \olch(K_{N,N}) - \ch(K_{N,N}) = \Omega(\log \log N)\textrm{.}
\]
\end{corollary}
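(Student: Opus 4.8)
The plan is to combine Theorem~1 with the standard reduction of $\ch(K_{N,N})$ to the extremal function $m(k)$ together with the Radhakrishnan--Srinivasan lower bound for that function. Theorem~1 already supplies what we need from the on-line side, namely $\olch(K_{N,N}) \ge \log N - \O{1}$, so the whole task reduces to showing that the choice number is smaller by an additive $\Om{\log\log N}$ term, i.e.\ that $\ch(K_{N,N}) \le \log N - (\tfrac12 + \o{1})\log\log N$.

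First I would make the encoding explicit in the direction we actually need: if $\ch(K_{N,N}) \ge k$ then $N \ge m(k)$ by the Erd\H{o}s--Rubin--Taylor correspondence. Substituting the bound $m(k) = \Om{\sqrt{k/\log k}\cdot 2^k}$ and taking logarithms gives $\log N \ge k + \tfrac12\log k - \tfrac12\log\log k - \O{1}$, hence $k \le \log N - \tfrac12\log k + \tfrac12\log\log k + \O{1}$.

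Next I would bootstrap using the crude two-sided estimate $\ch(K_{N,N}) = \Theta(\log N)$ (in fact $(1+\o{1})\log N$ is classical), which forces $\log k = \log\log N + \O{1}$ and $\log\log k = \O{\log\log\log N} = \o{\log\log N}$. Feeding this back into the previous inequality with $k = \ch(K_{N,N})$ yields $\ch(K_{N,N}) \le \log N - \tfrac12\log\log N + \o{\log\log N} = \log N - (\tfrac12 + \o{1})\log\log N$, as required.

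Finally, subtracting from Theorem~1 gives $\olch(K_{N,N}) - \ch(K_{N,N}) \ge (\log N - \O{1}) - \brac{\log N - (\tfrac12 + \o{1})\log\log N} = (\tfrac12 + \o{1})\log\log N = \Om{\log\log N}$, which is the claim. Since essentially all the weight of the argument sits in Theorem~1, which we are assuming, there is no genuine obstacle here; the only point that needs a word of justification is the self-referential substitution of $k \asymp \log N$ into the $\log k$ and $\log\log k$ terms, and this is harmless because those functions are monotone and $\ch(K_{N,N})$ is already known to lie within a constant factor of $\log N$.
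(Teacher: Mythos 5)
Your proposal is correct and follows essentially the same route as the paper: the Erd\H{o}s--Rubin--Taylor encoding combined with the Radhakrishnan--Srinivasan bound $m(k)=\Omega(\sqrt{k/\log k}\cdot 2^k)$ gives $\ch(K_{N,N})\leq \log N-(\tfrac12+o(1))\log\log N$, which is then subtracted from the lower bound $\olch(K_{N,N})\geq \log N-O(1)$ supplied by Theorem~1. You merely spell out the bootstrapping step ($k=\Theta(\log N)$, so $\log k=\log\log N+O(1)$) that the paper leaves implicit, and that step is carried out correctly.
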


This difference can be arbitrarily large, but still it is logarithmic as a function of the choice number.
By the result of Alon~\cite{Alon93}, we know that for any graph, the paint number is bounded from above with an exponential function of the choice number.
It is an interesting question to decide if the paint number can be bounded from above with a polynomial function of the choice number.

\subsection{On-line coloring of uniform hypergraphs}

The aforementioned parameter $m(k)$ has been introduced by Erd\H{o}s and Hajnal~\cite{ErdHaj61}.
Currently, the best bounds for $m(k)$ are
\[
	c \sqrt{\frac{k}{\log k}} \cdot 2^{k} \leq m(k) \leq (1+o(1)) \frac{e \ln 2}{4}k^2 \cdot 2^k.
\]
The lower bound has been proved by Radhakrishnan and Srinivasan~\cite{RS00}, the upper bound by Erd\H{o}s~\cite{Erd1964}.

An interesting version of the problem of hypergraph coloring is the \emph{on-line coloring}.
Within this setting vertices are presented on-line, one by one, each vertex $v$ together with indices of all edges that contain $v$.
A color for $v$ needs to be assigned immediately and can not be changed later.
The goal is to avoid monochromatic edges.

The problem of \emph{on-line $2$-coloring of $k$-uniform hypergraphs} can be formalized as a game between Presenter and Colorer.
The game is parametrized by two numbers: the cardinality of edges $k$, and the number of edges $N$.
Values of these parameters are known to both players before the game.
In each round, Presenter reveals one vertex and declares in which edges it is contained.
Presenter can not add vertices to edges which already contain $k$ vertices.
Colorer must immediately assign color $0$ or $1$ to the presented vertex.
Presenter wins when there exists a monochromatic edge containing $k$ vertices.
Colorer wins when all vertices have been revealed (i.e.\ all $N$ edges contain $k$ vertices each) and no edge is monochromatic.
The number $m^{OL}(k)$ is the smallest $N$ for which Presenter has a winning strategy in the on-line $2$-coloring game on $k$-uniform hypergraphs with $N$ edges.
The best bounds for $m^{OL}(k)$ shown before were
\[
	2^{k-1} \leq m^{OL}(k) \leq m(k) \leq (1+o(1)) \frac{e \ln 2}{4}k^2 \cdot 2^k.
\]
The lower bound has been obtained independently by several authors and can be considered as a derandomization of the analogous bound for $m(k)$ by Erd\H{o}s~\cite{Erd1963}.
An obvious upper bound is obtained by a strategy that presents (in any order) a small $k$-uniform hypergraph that is not $2$-colorable.

Aslam and Dhagat~\cite{AsDha93} gave an upper bound $m^{OL}(k) < k \cdot \phi^{2k}$, where $\phi= \frac{1+\sqrt{5}}{2}$ is the golden ratio.
Although their bound was weaker than already known upper bound for $m(k)$, their strategy has the advantage of being explicit.
Note that it is hard to efficiently construct such hypergraphs.
The best known deterministic construction by Gebauer~\cite{Gebauer13} gives such $k$-uniform hypergraphs with $2^{k+O(k^{2/3})}$ edges.

The classical correspondence between $2$-coloring of uniform hypergraphs and choosability of complete bipartite graphs translates also into the on-line setting.
Therefore our result on the paint number of a graph $K_{N,N}$ implies the following corollary.

\begin{corollary}
\[
  m^{OL}(k) = \Theta(2^k)\textrm{.}
\]
\end{corollary}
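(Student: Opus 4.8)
The plan is to derive this corollary from the main estimate $\olch(K_{N,N}) = \log N + \O{1}$ by an on-line version of the classical Erd\H{o}s--Rubin--Taylor correspondence. Writing $M^*(k) := \min\set{N : \olch(K_{N,N}) \ge k+1}$, I would prove the sandwich
\[
  M^*(k) \;\le\; m^{OL}(k) \;\le\; 2\,M^*(k),
\]
and then observe that the main theorem forces $\olch(K_{N,N}) \ge k+1$ for every $N \ge 2^{k+\O{1}}$ and $\olch(K_{N,N}) \le k$ for every $N \le 2^{k-\O{1}}$, so that $M^*(k) = 2^{k+\O{1}} = \Theta(2^k)$; together with the sandwich this yields $m^{OL}(k) = \Theta(2^k)$.

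For the upper bound $m^{OL}(k) \le 2M^*(k)$, put $N = M^*(k)$, so Lister wins the on-line list coloring game with $f \equiv k$ on $K_{N,N}$ with parts $A = \set{a_1,\dots,a_N}$, $B = \set{b_1,\dots,b_N}$. Presenter plays the on-line $2$-coloring game on a $k$-uniform hypergraph with $2N$ edges $e^A_1,\dots,e^A_N,e^B_1,\dots,e^B_N$ (one edge per vertex of $K_{N,N}$) by simulating Lister's strategy: when Lister presents a round $V_j$, Presenter reveals a fresh vertex $v_j$ and declares it contained in exactly the edges $\set{e^A_i : a_i \in V_j} \cup \set{e^B_i : b_i \in V_j}$. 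The structural fact that every independent set of $K_{N,N}$ lies in a single part is what makes the dictionary work: Presenter reads Colorer's answer $c(v_j) = 1$ as Painter's legal move $X_j = V_j \cap A$, and $c(v_j) = 0$ as $X_j = V_j \cap B$. Then $e^A_i$ receives a vertex precisely in the rounds where $a_i$ is presented, and all of those vertices get color $0$ exactly when $a_i$ is never colored; so the moment Lister's (winning) strategy makes some uncolored $a_i$ (or $b_i$) reach $k$ permissible colors, the edge $e^A_i$ (resp.\ $e^B_i$) is monochromatic on $k$ vertices and Presenter has won. No edge over-fills, since in a valid play of the $f\equiv k$ game a vertex can be presented at most $k$ times before the game ends, so this is a legitimate Presenter strategy and $m^{OL}(k)\le 2N$.

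For the lower bound $M^*(k) \le m^{OL}(k)$ I would run the reverse simulation. Suppose $N < M^*(k)$, i.e.\ $\olch(K_{N,N}) \le k$, so Painter wins the $f\equiv k$ game on $K_{N,N}$; Colorer then survives the on-line $2$-coloring game on a $k$-uniform hypergraph with $N$ edges $e_1,\dots,e_N$, identified with the pairs $(a_i,b_i)$. When Presenter reveals a vertex $w$ in edge set $S$, Colorer offers Painter the round $V = \set{a_i : i \in S,\ a_i\text{ uncolored}} \cup \set{b_i : i \in S,\ b_i\text{ uncolored}}$ (an empty $V$ means each $e_i$, $i\in S$, is already bichromatic, so the answer is arbitrary). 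Using the standard reduction that Painter may be assumed to color a maximal one-sided subset, Painter answers by coloring all of $V \cap A$ — Colorer sets $c(w)=0$ — or all of $V \cap B$ — Colorer sets $c(w)=1$. One maintains the invariant that $e_i$ contains a $0$-vertex precisely when $a_i$ is colored, and its mirror for $b_i$; the point is that Painter's strategy keeps the permissible-color counts of $a_i$ and $b_i$ below $k$, and since these counts measure exactly how many further vertices of each color $e_i$ still needs, they force the $k$-th vertex placed into $e_i$ to receive the color $e_i$ is missing. Hence no edge closes up monochromatic, Colorer survives, and $m^{OL}(k) > N$ for every $N < M^*(k)$. (Alternatively, for the lower half one may simply quote the known bound $m^{OL}(k) \ge 2^{k-1}$.)

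I expect the main obstacle to be the bookkeeping inside the two simulations rather than any new combinatorial idea: in the forward direction one must check that the translation of Colorer's colors into Painter's moves is faithful and that edges neither over-fill nor close up before Lister's win is realized, and in the reverse direction one must confirm that Painter's control of the two permissible-color counters attached to each edge really blocks every monochromatic closure. Everything rests on the single observation that independent sets in $K_{N,N}$ are one-sided, which is exactly what turns Painter's ``pick a subset'' into Colorer's binary ``pick a color'' and back; with that dictionary fixed the sandwich $M^*(k) \le m^{OL}(k) \le 2M^*(k)$ is routine, and substituting $\olch(K_{N,N}) = \log N + \O{1}$ gives $m^{OL}(k) = \Theta(2^k)$.
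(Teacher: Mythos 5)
Your proof is correct and follows essentially the same route as the paper: the factor-$2$ sandwich between the paintability threshold of $K_{N,N}$ and $m^{OL}(k)$, built on the observation that independent sets of $K_{N,N}$ are one-sided so Painter's choice of a part becomes Colorer's binary color, is exactly the correspondence the paper establishes in Section~\ref{sec:chips} (there phrased with the $(k,N)$ chip game as the common intermediary), and the final step is the same substitution of $\olch(K_{N,N}) = \log N + O(1)$. The only cosmetic difference is that you compose the two simulations directly rather than routing both games through the chip game, and you re-derive the lower half from the Painter-to-Colorer simulation whereas the paper may simply quote the known bound $2^{k-1} \leq m^{OL}(k)$.
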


Moreover, we describe an explicit, deterministic strategy for Presenter.

\section{Chip games}\label{sec:chips}

Aslam and Dhagat~\cite{AsDha93} modeled on-line coloring of hypergraphs by a specific chip game.
We use the following, slightly modified variant which corresponds to the on-line list coloring game on $K_{N,N}$.

\subsection{General chip game}

A \emph{$(k,N)$ chip game} is a finite two player game with perfect information played on a board consisting of two directed paths on $k+1$ vertices each.
Consecutive vertices on each path are indexed with numbers $k, k-1, \ldots, 0$.
During the game, every vertex on each path is occupied by a nonnegative number of chips.
In the starting configuration only vertices with index $k$ are occupied, each of them by exactly $N$ chips.
In each round Pusher chooses an arbitrary nonempty set of chips and moves them all one step forward (towards $0$).
Then, Remover chooses one of the paths and removes from the board all the chips that have been moved in this round on the chosen path.
Thus, the number of chips decreases in each round.
The game ends when either there is a chip on the last vertex on any path (in this case Pusher wins) or all chips are removed from the board (in this case Remover wins).

\subsection{Chip game -- paintability}

An on-line list coloring game on $G=K_{N,N}$ with lists of length $k$ can be rephrased in terms of a $(k,N)$ chip game.
Paths in the chip game correspond to parts of $G$, chips correspond to uncolored vertices of $G$
and a position of a chip on its path is $k$ minus the number of its permissible colors, or equivalently the number of colors that are yet to come for that vertex.
At the beginning, no vertex of the graph has any permissible colors and indeed all chips are at position $k$.
In each round, Lister presents some set $V_j$ of uncolored vertices and Painter chooses an independent subset $X_j$ of $V_j$.
Clearly, $X_j$ contains vertices in only one part of $K_{N,N}$. 
On the other hand, we may assume that $X_j$ contains all vertices moved in that part.
It is easy to check that any winning strategy can be modified to always remove all chips moved in one of the parts.
As a result, one part of $V_j$ gets colored, while the number of colors that are yet to come for any vertex in the other part of $V_j$ decreases by one.
Presenting the set $V_j$ by Lister is modeled in the chip game as Pusher moving chips corresponding to vertices in $V_j$.
Coloring one part by Painter is modeled as Remover choosing the path from which vertices are removed.
Chips that correspond to vertices in $X_j$ are removed from the board and those that correspond to $V_j \setminus X_j$ are moved one step forward.
Thus, after each move, the position of a chip matches the number of permissible colors that are yet to come for a corresponding vertex.

By the above discussion, instead of asking what is the paint number of $K_{N,N}$ we can equivalently ask what is the smallest number $k$ for which Remover wins $(k,N)$ chip game.
Similarly, instead of asking what is the minimum $N$ for which $K_{N,N}$ is not $k$-paintable we can ask what is the minimum $N$ for which Pusher wins $(k,N)$ chip game.

\subsection{Chip game - on-line coloring of uniform hypergraphs}

Suppose that Remover has a winning strategy in a $(k,N)$ chip game.
Colorer can use this strategy to win an on-line $2$-coloring game on $k$-uniform hypergraphs with $N$ edges.
Colorer represents each edge of the hypergraph by two chips, one on each path.
When Presenter reveals a new vertex $v$ and declares to which edges $v$ belongs, Colorer makes a Pusher's move in the chip game.
Colorer moves all chips representing edges that contain $v$.
If the winning strategy for Remover removes chips from the first path, then Colorer colors $v$ with $0$.
Otherwise, Colorer uses color $1$.
During the game, a monochromatic edge containing $i$ ($0<i<k$) vertices is represented by a single chip on position $k-i$ in the chip game.
Eventually, Remover wins the game which means that no edge with $k$ vertices is monochromatic.

Similarly, Pusher's winning strategy in a $(k,N)$ chip game can be used by Presenter to win an on-line $2$-coloring game on $k$-uniform hypergraphs with $2N$ edges.
Presenter represents each chip in the chip game by an edge of the hypergraph.
When winning strategy for Pusher moves chips in a set $V$, Presenter reveals a vertex $v$ belonging to all edges representing chips in $V$.
If Colorer colors $v$ with $0$, then Presenter removes, as Remover, in the chip game chips on the first path.
Otherwise, Presenter removes chips moved on the second path.
A chip on position $k-i$ is represented by a monochromatic edge containing $i$ vertices.
Hence, Pusher's win means that eventually there is a monochromatic edge containing $k$ vertices.

\subsection{$1$-restricted game}

Aslam and Dhagat~\cite{AsDha93} considered also a \emph{$c$-restricted} variant of the game.
In this variant, Pusher is allowed to move at most $c$ chips on each path in a single round.
In the context of hypergraph coloring, this restriction corresponds to a bound on the vertex degree of the presented hypergraph, i.e.\ hypergraphs presented in a $c$-restricted chip game have maximum vertex degree at most $2c$.
Clearly, for a fixed $c$ and a large enough $k$, every $k$-uniform hypergraph of maximum vertex degree at most $2c$ is $2$-colorable.
Similar statement does not hold in the on-line setting.
Aslam and Dhagat~\cite{AsDha93} showed that for $N \geq (3+2 \sqrt{2})^k$ Pusher has a winning strategy in a $1$-restricted $(k,N)$ chip game.
Let $t_c(k)$ be the minimum value of $N$ for which Pusher has a winning strategy in a $c$-restricted $(k,N)$ chip game.
We determine the asymptotics of the threshold function $t_1(k)$ up to a multiplicative factor.

\begin{theorem}
\label{thm:1-restricted}
\[
  t_1(k) = \Theta(F_{2k})\textrm{,}
\]
where $F_{2k}$ is the $(2k)$-th Fibonacci number.
\end{theorem}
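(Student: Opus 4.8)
The plan is to establish $t_1(k) = O(F_{2k})$ and $t_1(k) = \Omega(F_{2k})$ separately, the upper bound by a strategy for Pusher and the lower bound by a strategy for Remover. Both halves will revolve around the identity $F_{2k} = 3F_{2k-2} - F_{2k-4}$, equivalently around the coupled recursion $F_{2k} = F_{2k-1} + F_{2k-2}$, $F_{2k+1} = F_{2k} + F_{2k-1}$. This coupling is the algebraic shadow of the basic dynamics of the $1$-restricted game: whenever both paths are non-empty we may assume Pusher moves one chip on each of them (moving on a single path only lets Remover remove that chip for free), so in every round exactly one chip advances a single step on the path Remover spares while one chip vanishes from the other path. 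Hence the single ``live threat'' oscillates between the two paths, and the even- and odd-indexed Fibonacci subsequences will measure the value of a position according to which path currently carries that threat. The base cases $t_1(1) = 1 = F_2$ and $t_1(2) = 3 = F_4$ can be verified by hand and already display this oscillation.

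\textbf{Lower bound.} I would introduce a potential $\Phi$ on positions so that: it equals $\Theta(N)$ at the start; it is forced to be $\Omega(F_{2k})$ in any position with a chip on the last vertex of a path; and Remover has a reply that never increases it. This gives that Remover wins once $N$ is below a fixed constant times $F_{2k}$, i.e. $t_1(k) = \Omega(F_{2k})$. A chip at position $i$ will get weight $w_i$ with $w_k = 1$ and consecutive ratio tending to $\phi^2$ --- so $w_i$ grows like $F_{2(k-i)}$ and $w_0 = \Theta(F_{2k})$ --- the weights being defined by exactly the Fibonacci recursion. A purely additive potential $\sum_{\text{chips}} w_i$ cannot be kept non-increasing once $w_{i-1} > 2w_i$, the obstruction being a move of two equally advanced top chips; the fix is two refinements. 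First, chips stacked at the same position on a path contribute with a geometric discount, so that advancing a top chip is paid for from the reserve it leaves behind. Second, Remover keeps the two paths out of phase by removing the more advanced of the two moved chips (breaking ties to keep them out of phase): when the spared chip sits at position $j$ and the removed one at position $i<j$, one has $w_i \ge w_{j-1} \ge w_{j-1}-w_j$, so sparing the less advanced path costs nothing. Checking that Remover's reply is non-increasing for every kind of Pusher move --- a ``synchronising'' push and a ``draining'' push in particular --- will be the bulk of the work, and the tight inequalities that pin the growth rate at $\phi^2$ are the ones arising just after a threat has been cashed in.

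\textbf{Upper bound.} For Pusher I would use a recursive strategy whose aim, from $N$ chips at position $k$ on each path, is to reach a position with a chip at position $1$ on both paths at once; from there one round wins, since Remover erases the advance on only one path. Creating that pair of level-$1$ threats proceeds as a cascade: a threat at level $j$ on the path currently holding the initiative is converted, by pushing it one step toward $0$, into a move Remover must answer on that very path, which hands the initiative to the other path at level $j-1$; alternating this descent between the paths and refilling the active path from its reserve uses a number of chips governed by $a_k = 3a_{k-1} - a_{k-2}$ with $a_1 = 1$, $a_2 = 3$, whose solution is $a_k = F_{2k}$. So $N = O(F_{2k})$ chips per path suffice and $t_1(k) = O(F_{2k})$. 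Alternatively Pusher's play can be presented as greedily increasing a potential dual to the one above; the two descriptions meet at the same recursion, which is why the two bounds agree up to a constant.

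\textbf{Where the difficulty lies.} The hard part will be the lower bound --- choosing the discount factor and the tie-breaking rule and then checking that Remover's reply keeps $\Phi$ non-increasing in every case, with the boundary situations (a path becoming short, a threat being spent) arranged so that no round gains more than the discount already set aside. The Pusher-side accounting is comparatively routine once the cascade is laid out, the one subtlety being the $-a_{k-2}$ correction, which records reuse of chips between two consecutive levels of the descent and is precisely what lowers the base of the exponential from $3$ to $\phi^2$. Since Remover strategies transfer to Colorer strategies for on-line $2$-coloring and Pusher strategies to explicit Presenter strategies, this construction should also supply the promised explicit deterministic Presenter strategy in the bounded-degree regime.
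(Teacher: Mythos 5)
Your high-level architecture is the same as the paper's (Remover removes the more advanced of the two moved chips, a Fibonacci-weighted potential certifies the lower bound, an explicit Pusher strategy gives the upper bound), but at the crux of the lower bound the paper's key device is absent and the patch you propose in its place would not work. The obstruction you correctly identify --- both moved chips sitting at the same position $j$, where a symmetric additive potential with ratio $\phi^2$ gains $w_{j-1}-2w_j>0$ no matter which chip is removed --- is resolved in the paper not by discounting stacked chips but by weighting the two paths with \emph{interleaved} Fibonacci numbers: a chip on vertex $i$ of the first path gets weight $F_{2(k-i)+2}$, on the second path $F_{2(k-i)+1}$, and Remover removes the lower-positioned moved chip, breaking ties toward the first path. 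Then the surviving chip's weight rises from $F_j$ to $F_{j+2}$, a gain of $F_{j+1}$, while the removed chip (being at a lower position, or at the same position but on the heavier path) had weight at least $F_{j+1}$, so the \emph{plain additive} sum never increases; it starts below $F_{2k+1}$ and a chip on vertex $0$ would weigh at least $F_{2k+1}$, so Remover wins the $1$-restricted $(k,\lfloor (F_{2k+1}-1)/2\rfloor)$ game and $t_1(k)=\Omega(F_{2k})$. Your geometric discount, by contrast, contradicts your own requirement that the initial potential be $\Theta(N)$ (all $N$ chips start stacked on vertex $k$, so any fixed discount makes the start $O(1)$), and it is inert in the critical configuration of two lone frontier chips at equal depth with the vertex ahead empty; nor does tie-breaking alone let Remover, who may only delete a moved chip, maintain an ``out of phase'' invariant --- you would have to prove that equal deep frontiers can never recur, which you do not, and which is exactly the difficulty the asymmetric weights eliminate.

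On the upper bound the paper proceeds quite differently from your cascade: Pusher organizes chips into $2k-2$ \emph{towers} (an $(a,b)$-tower occupies the top $a$ vertices of the first path and the top $b$ of the second) plus two buckets of spares, and in each step either rebuilds a one-sided tower from a bucket, exchanges two equal-size towers of different shape, or advances two equal-size, equal-shape towers, replacing sizes $s,s$ by $s+1,s-2$; a pigeonhole over the $2k-3$ admissible sizes shows some action is always available, and the potential assigning $F_s$ to a size-$s$ tower is non-decreasing (advance uses $2F_s=F_{s+1}+F_{s-2}$), which caps the bucket consumption and yields a win with $F_{2k}+2kF_{k+1}$ chips per path. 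Your recursion $a_k=3a_{k-1}-a_{k-2}$ has the right solution $F_{2k}$, but the cascade behind it is unspecified: you do not say how a threat at level $j$ is re-established after Remover cashes it, how ``refilling'' interacts with the one-chip-per-path-per-round restriction (every refill move is itself answered by Remover), or why the reuse term is exactly $a_{k-2}$ --- precisely the bookkeeping the tower potential is designed to bypass. As written, both halves are plans with the correct target rather than proofs, and the lower-bound plan in particular is missing the idea (asymmetric even/odd Fibonacci weights) that makes a purely additive potential suffice.
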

It is an interesting open problem, stated already by Aslam and Dhagat~\cite{AsDha93}, to determine $\lim_{k\to \infty} t_c(k)^\frac{1}{k}$ for $c \geq 2$.

\subsection{Maker-Maker-Breaker game}
Pegden~\cite{PegdenPC} suggested an interesting variant of Maker-Breaker games.
In order to balance a Maker-Braker game in which Breaker wins, it is usual to allow Maker to make more subsequent moves for one Breaker move.
Pegden suggested an alternative solution, that is, to replace a single Maker by a coalition of Makers.
In a simple setting we have two Makers (red and blue), and one Breaker (black).
They play on a family of $N$ disjoint $k$-sets of vertices.
In one turn, in a prescribed order, each player picks some uncolored vertex and paints it with his color.
Makers cooperate to achieve the common goal which is to color one of the $k$-sets monochromatic red or blue, while Breaker wants to prevent it.
Natural question is, how large $N$ should be in order for Makers to have a winning strategy.
The game is naturally modeled by a 1-restricted $(k,N)$ chip game with a modification that allows Remover to delete not only one of the moved chips, but any chip from the board.
As it turns out, the answer in this modified setting is analogous to the result of Theorem~\ref{thm:1-restricted} -- the threshold function is also $\Theta(F_{2k})$.
The lower bound, which corresponds to a strategy for Breaker is valid in the modified setting.
The proof of the upper bound from Proposition~\ref{prop:1-restricted-upper} does not seem to generalize.
An alternative, more technical, proof of this bound which is also valid in the modified setting will appear in the full version of the paper.

\section{Main results}\label{sec:main}

\subsection{General game.}

We assign weight $2^{-i}$ to a chip on vertex $i$ on any path (recall that vertices on each path are indexed from $k$ to $0$).
Weight of a group of chips is the sum of weights of chips in this group.
For a warm-up we present a simple strategy which allows Pusher to win $(k, k \cdot 2^{k-1})$ chip game.
It is weaker than our subsequent result but better than anything that has been known so far.

\begin{proposition}
Pusher has a winning strategy in a $(k,k \cdot 2^{k-1})$ chip game.
\end{proposition}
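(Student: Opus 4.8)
The plan is a potential argument built on the weight function just introduced: a chip at position $i$ carries weight $2^{-i}$, so the starting position, with $k\,2^{k-1}$ chips at position $k$ on each path, has total weight exactly $k$. The basic observation about one round is an accounting identity. When Pusher moves a set of chips, each moved chip doubles its weight; Remover then erases the moved chips on one path. Hence the total weight on the board changes by the weight Pusher moved on the path Remover did \emph{not} choose, minus the weight Pusher moved on the path Remover \emph{did} choose. In particular, if Pusher always moves subsets of equal weight on the two paths, the total weight is preserved no matter how Remover plays; and any unequal split lets Remover force a strict decrease by clearing the heavier side.

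Accordingly, Pusher should play to freeze the total weight at its initial value $k$: in every round he moves a nonempty, equal-weight subset of each path, choosing each subset to be a proper subset so that Remover cannot wipe out a whole path. Assuming this is always possible, the total weight stays equal to $k>0$ forever, so the board always contains at least one chip and Remover never wins. On the other hand, every round some chip is moved and at least one moved chip is erased, so the number of chips strictly decreases and the game must end. A finite game that Remover cannot win is won by Pusher.

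The delicate point --- and what I expect to be the crux --- is to justify that Pusher can keep making such moves, in particular that the required equal-weight proper subsets always exist and that Pusher is never forced to move all the chips of a path. Trouble can only occur when the two weight distributions become badly misaligned, typically when one path is down to a handful of chips while the other holds nearly all of the weight. I would deal with this by strengthening Pusher's invariant (keeping each path stocked with enough chips, which are pushed toward low positions as that path absorbs weight) together with a direct-attack fallback for the remaining degenerate positions: if one path carries a large weight bunched near position $1$, Pusher repeatedly pushes one of its chips onto position $0$, which forces Remover to keep clearing that path (else Pusher wins immediately), while the other path's chips advance unhindered until one of them reaches position $1$ --- producing a position with a chip at position $1$ on each path, from which Pusher wins in one more move. (An alternative to the whole weight argument is the recursive strategy that reduces the $(k,N)$ game to the $(k-1,\cdot)$ game by pushing half the chips on each path one step forward and paying an overhead of $2^{k-1}$ chips per level; the recursion $W(k)\le 2W(k-1)+2^{k-1}$ with $W(1)=1$ unwinds exactly to $W(k)\le k\,2^{k-1}$, and may be easier to make fully rigorous.)
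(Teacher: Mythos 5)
Your accounting identity is correct, and the idea of balancing the moved weight on the two paths so that the total weight $k$ is frozen is a genuinely good instinct (it is close to the doubling technique the paper uses later for the stronger $(k,8\cdot 2^k)$ bound). But the proof has a real gap exactly at the point you yourself flag as the crux, and that point is the entire content of the proposition: you never establish that Pusher can keep producing nonempty equal-weight (proper) subsets on both paths. Note that equal-weight moves conserve only the \emph{total} weight, not the weight of each path: when Remover clears a path it loses weight $w$ and the other path gains $w$, so Remover is free to drain one path round after round. Nothing in the invariant ``total weight $=k$'' prevents a position in which one path is down to a single chip (no nonempty proper subset exists), or in which one path's entire weight is smaller than the lightest chip on the other path (no equal-weight pair exists at all). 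Your proposed repairs do not close this: the ``strengthened invariant'' is never stated, and the direct-attack fallback needs a fresh chip at position $1$ on the heavy path in each of up to $k-1$ consecutive rounds (weight about $k/2$ parked at position $1$) while a chip from the other path walks all the way down --- you give no argument that play under your strategy ever produces such a configuration. The parenthetical recursion $W(k)\le 2W(k-1)+2^{k-1}$ has the same unaddressed difficulty: if Pusher ``pushes half the chips on each path one step forward,'' Remover can clear the same path every time, so only one path ever makes progress and no strategy realizing the recursive step is actually described; the arithmetic unwinding to $k\cdot 2^{k-1}$ is not evidence that the step is achievable.

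For contrast, the paper's proof is precisely a mechanism for guaranteeing perpetual feasibility. Chips are grouped into bricks of $2^{i-1}$ chips at vertex $i$ (each brick of weight $\tfrac12$), and Pusher always moves the lowest brick on each path; the surviving brick advances and splits into two, so the number of bricks stays $2k$, and a positional invariant (at most one brick per vertex except possibly two at the lowest occupied vertex, plus at most $k-1$ at the start vertex once play has begun) shows a single path can carry at most $2k-1$ bricks. Hence both paths always contain a brick, the prescribed move is always available, and finiteness of the game forces a chip to reach vertex $0$. To salvage your approach you would need an invariant of comparable strength about how the weight is distributed along each path, not just its total; as written, the argument proves only that \emph{if} Pusher can always balance, then he wins, which begs the question.
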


\begin{proof}
We define a \emph{brick} to be a group of exactly $2^{i-1}$ chips occupying vertex $i$ on any path.
Observe that any such brick has weight $\frac{1}{2}$.
At the beginning of a $(k,k \cdot 2^{k-1})$ chip game there are $k$ disjoint bricks on the starting vertex on each path.
The strategy for Pusher is as follows.
In each round, he selects a brick with lowest possible position on each path, and moves those two bricks forward.
Remover responds by removing one of those bricks.
The other brick moves one step forward, say from position $i$ to $i-1$.
Assuming that $i-1>0$ (otherwise the game is won by Pusher), those $2^{i-1}$ chips on position $i-1$ can be split into two disjoint bricks.
As a result, one of the bricks gets removed, but the other one moves forward and splits into two.
The number of disjoint bricks on the board stays the same.
Consider the distribution of bricks on one path and assume that the lowest occupied position on that path is $i$.
We claim that if $i < k$ then there are at most two disjoint bricks on position $i$ and at most one brick on each position from $k-1$ to $i+1$.
This observation is clearly true in the starting position.
In each subsequent round, a brick with the lowest possible position is removed and possibly two bricks appear on a previously unoccupied vertex.
Hence, the property is preserved during the whole game.

Suppose that Pusher can not make a move according to the described strategy.
It means that there are no more bricks on one of the paths.
Since the number of bricks is constant, all $2k$ bricks are on the other path.
By the observation above, that path contains at most $k$ bricks on positions from $k-1$ to $1$.
Moreover, at least one brick must have left the first vertex and hence there are at most $k-1$ bricks on position $k$.
Altogether, that path contains at most $2k-1$ bricks which gives a contradiction.
\end{proof}

\begin{proposition}
Pusher has a winning strategy in a $(k, 8 \cdot 2^{k})$ chip game. 
\end{proposition}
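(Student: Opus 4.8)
The plan is to refine the warm-up strategy so that the bricks stay concentrated near the front of each path instead of spreading over all $k$ vertices, which is exactly what forced the weaker $k\cdot 2^{k-1}$ bound. I keep the weight $2^{-i}$ of a chip on vertex $i$ and the notion of a brick (a group of $2^{i-1}$ chips on vertex $i$, of weight $\frac12$), so a $(k,8\cdot 2^{k})$ game starts with $16$ bricks on the top vertex of each path. The backbone is balanced play: in each round the set of chips Pusher moves has equal weight on both paths, so that whichever path Remover cleans the total weight on the board does not decrease (a surviving move doubles the weight of the moved chips, a cleaned move deletes it, and the two effects cancel when the moved weights match). Thus Pusher never loses as long as he can keep making balanced moves, and the whole content of the bound is to keep the configuration rich enough — neither path too depleted or too coarse — that balanced moves remain available while at the same time a chip is driven to vertex $0$. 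To this end Pusher maintains on each path a \emph{descending ladder}: a contiguous block of occupied vertices ending at the current frontier (its lowest occupied vertex), with the frontier reinforced by enough bricks, using his reserve on vertex $k$ only to start the ladder. Because a surviving advance of a brick doubles it (its $2^{i-1}$ chips become $2^{i-1}$ chips on vertex $i-1$, i.e.\ two bricks), advancing the frontier brick both replenishes the frontier and extends the ladder, so that when Remover cleans a path its frontier slips back by just one vertex and the rest of its ladder is intact.

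Granting this, the win follows from a potential argument. Put $\Phi=(k-\ell_1)+(k-\ell_2)$, where $\ell_j$ is the frontier of path $j$ (initially $\ell_j=k$, so $\Phi=0$). In each round, on the path Remover does not clean the frontier moves one vertex forward, raising $\Phi$ by one, while on the cleaned path the frontier merely returns to where it stood at the start of the round; hence $\Phi$ increases by at least one every round. Since $\Phi=2k-1$ already forces $\ell_1=0$ or $\ell_2=0$, after at most $2k-1$ rounds some frontier reaches vertex $0$ — but the first time this happens there is a chip on a last vertex and Pusher has won. So it only remains to rule out the other way the game could end, namely Remover exhausting the board, equivalently to check that the configuration stays rich enough for Pusher to keep making balanced moves for the $O(k)$ rounds this takes; this is precisely the statement that $8\cdot 2^{k}$ chips per path are enough.

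That chip budget is the heart of the matter, and the step I expect to be the real obstacle. Cleaning a path destroys the (doubled) frontier brick Pusher just advanced there, which costs $\Theta(2^{\ell})$ chips when that path's frontier is at vertex $\ell$; because the two frontiers descend and leapfrog, each passing through a given vertex only $O(1)$ times, the total number of chips Pusher spends is dominated by a geometric series $\sum_{\ell\le k}\Theta(2^{\ell})=\Theta(2^{k})$ on each path, and the constants must be arranged so that this stays below $8\cdot 2^{k}$. The delicate point is to specify Pusher's frontier-reinforcement moves precisely enough that two things hold simultaneously: every path always carries enough bricks at its frontier for a single cleaning to cost it only one vertex (never opening a wider gap in the ladder, never stranding the trail above it), and the per-round expenditure stays within the geometric bound. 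Balancing the self-replenishment coming from the doubling against the losses coming from cleanings is an amortized analysis, and it is there that the constant $8$ gets pinned down.
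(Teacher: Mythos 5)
Your proposal is a plan rather than a proof: the step you yourself call ``the heart of the matter'' --- specifying Pusher's moves precisely enough that every frontier always keeps spare chips while the total expenditure stays below $8\cdot 2^{k}$ --- is exactly the content of the proposition, and it is left as a heuristic (``a geometric series $\sum_{\ell\le k}\Theta(2^{\ell})$''). Two concrete things break without it. First, your potential argument needs the cleaned path's frontier never to recede; that requires that in \emph{every} round the frontier vertex of \emph{each} path retains chips after Pusher's move is deleted, even though Pusher must commit his move before knowing which path Remover cleans. If a frontier ever holds a single chip (or a single brick that must be moved to keep the other side advancing), a cleaning makes $\Phi$ stall, and nothing in the sketch rules out Remover forcing this repeatedly. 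Second, your balanced-weight bookkeeping conserves only the \emph{total} weight; per round the cleaned path loses weight $w$ and the other gains $w$, so the two paths' weights drift apart, and it is precisely this drift (not the total) that threatens the availability of balanced moves and of frontier reinforcements. You offer no mechanism to control it, whereas the paper's proof is essentially built around controlling it: it keeps each path's weight \emph{exactly} $8$ (by moving, each phase, weight equal to the current deficit so the deficit doubles away), confines each path's chips to two consecutive vertices, bounds the difference of the two ``distances'' by $8$, and organizes rounds into phases around a designated running chip, with an explicit case analysis depending on Remover's first answer. Those invariants are what simultaneously guarantee ``never run out of chips'' and ``a chip reaches vertex $0$''; your ladder-plus-reinforcement picture would need an analogous, fully specified bookkeeping before the constant $8$ (or any constant) is actually established, and until then the claim is unproved.
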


\begin{proof}
For the specified parameters we describe a winning strategy for Pusher.
We are going to use the same weights as in the previous proof: a chip on vertex $i$ has weight $2^{-i}$.
Pusher is going to play his strategy in phases.
Before and after each phase the following invariants hold.
With each invariant we give a short idea how the strategy is going to preserve it.
These ideas will be described in full detail later.
\smallskip

\noindent \textbf{Weight Invariant:} \emph{The total weight of chips on any path equals $8$.}
The strategy will preserve this invariant using the following doubling technique.
When the total weight of a single path differs from $8$ by $w$, the strategy will move chips of total weight $w$.
This way the difference from $8$ either doubles or settles to zero.
\smallskip

\noindent \textbf{Consecutive Vertices Invariant:} \emph{Chips on any path occupy at most two consecutive vertices on the path: $h$ and $h-1$.}
Thus, the state of a single path can be described by two values: the highest occupied vertex $h$ ($0 < h \leq k$) and the total weight of chips on that vertex $W$ ($0 < W \leq 8$).
The \emph{distance} of a path $D$ is defined by $D = 8h + W$.
The strategy will preserve this invariant by moving the chips from the highest occupied vertex first.
\smallskip

\noindent \textbf{Difference Invariant:} \emph{Let $D_1$, $D_2$ be the distances of the first and the second path. The difference $|D_1 - D_2|$ is at most $8$.}
The distances of both paths will decrease in each phase.
The strategy will preserve this invariant by guaranteeing that in each phase the distances of both paths do not decrease by more than $8$ and that the distance of a path with the higher distance decreases more than the distance of the other path.
\smallskip

Without loss of generality, assume that the distance of the first path is greater or equal to the distance of the second path and that $8h_1 + W_1 = D_1 \geq D_2 = 8h_2 + W_2$.
As the difference between $D_1$ and $D_2$ is at most $8$ and $W_1,W_2 \leq 8$, we get $h_2 \leq h_1 \leq h_2 + 1$.
As the total weight of chips on each path equals $8$, the number of chips on highest occupied vertex on any path is even.
Let $\omega = 2^{-h_2}$ be the weight of a chip occupying vertex $h_2$ on the second path.
During the phase Pusher chooses some groups of chips to be moved.
Saying that Pusher \emph{draws} chips of some weight $\alpha$ from a path we mean that he selects a group of chips of total weight $\alpha$ adding chips one by one from the highest possible position.
In the first round of any phase Pusher draws chips of total weight $8 \omega$ from the first path, and draws chips of total weight $2 \omega$ from the second path.
The rest of the phase depends on the Remover's answer in the first round.
\smallskip

\textbf{Case 1.} Remover responds to the first move by removing chips from the first path.
After that, the total weight of the first path is $8-8\omega$.
Pusher selects one of the two chips moved on the second path to be the \emph{running chip}.
Other chips on the second path have total weight $8$ and none of them will be moved anymore in this phase.
In the $i$-th round in this phase (for $i=2,3,\ldots$) Pusher draws chips of total weight $4\cdot2^{i-1}\omega$ from the first path and moves them together with the running chip.
The phase ends when Remover removes the running chip.
Assume that it happens in the $m$-th round and observe that $m \leq h_2$, for otherwise the running chip gets to vertex $0$ and Pusher wins.
The total weight that is drawn from the first path is
$$
8\omega + 4\cdot\sum_{i=2}^{m}2^{i-1}\omega = 4\cdot2^m\omega \leq 4\cdot2^{h_2}\cdot2^{-h_2} = 4\textrm{,}
$$
which guarantees that Pusher does not run out of the chips.
When the phase ends, the total weight of any path equals $8$, as $8 - 8\omega - 4\cdot\sum_{i=2}^{m}2^{i-1}\omega + 4\cdot2^m\omega = 8$.
The drawing technique guarantees that chips on any path occupy at most two consecutive vertices.
The distance of the second path has decreased by exactly $2\omega$.
Let $B=4\cdot2^m\omega \leq 4$ be the total weight of chips drawn from the first path during this phase.
Case analysis shows that the distance of the first path decreases by:
\[
 \begin{cases}
		B, & \text{for } B \leq W_1; \\
    2B-W_1, & \text{for } \frac{B}{2} \leq W_1 \leq B;\\
    B+W_1, & \text{for } W_1 \leq \frac{B}{2}.\\
	\end{cases}
\]
In particular, it decreases by at least $4 \omega$ and by at most $6$.
As a result, the difference between the distances of both paths is still at most $8$.
\smallskip

\textbf{Case 2.} Remover responds to the first move by removing chips from the second path.
After that, Pusher selects running chips: one or two chips of total weight $2\omega$ from those that were moved in the first round.
Then, the total weight of the rest of the chips on the first path is $8+6\omega$.
The total weight of the second path is $8-2\omega$.
In the $i$-th round in this phase (for $i=2,3,\ldots$) Pusher draws chips of total weight $3\cdot2^{i-1} \omega$ from the first path and chips of total weight $2^{i-1}\omega$ from the second path and moves them together with the running chips.
The phase ends when Remover removes the running chips.
Assume that it happens in the $m$-th round and observe that $m \leq h_1 \leq h_2 + 1$.
The total weight that is drawn from the first path is
$$
8\omega + 3\cdot\sum_{i=2}^{m}2^{i-1}\omega = 2\omega + 3\cdot2^m\omega \leq 2\omega + 3\cdot2^{h_2+1}\cdot2^{-h_2} \leq 7\textrm{,}
$$
while the total weight that is drawn from the second path is
$$
2\omega + \sum_{i=2}^{m}2^{i-1}\omega = 2^m\omega \leq 2^{h_2+1}\cdot2^{-h_2} = 2\textrm{,}
$$
which guarantees that Pusher does not run out of the chips.
When the phase ends, the total weight of each path equals $8$ and chips on each path occupy at most two consecutive vertices.
By the same argument as in Case 1, the distance of the second path decreases by at most $3\cdot2^{m-1}\omega$.
For $A=3 \cdot 2^m \omega$, case analysis shows that the distance of the first path decreases by: 
\[
	\begin{cases}
		A+2\omega, & \text{for } A+2\omega \leq W_1; \\
    W_1, & \text{for } \frac{A}{2}+2\omega \leq W_1 \leq A+2\omega;\\
    A+4\omega-W_1, & \text{for } W_1 \leq \frac{A}{2}+ 2\omega.
\\
	\end{cases}
\]
In particular it decreases by at least $2\omega+3\cdot 2^{m-1} \omega$ and by at most $2\omega+3\cdot 2^{m} \omega \leq 8$.
Thus, the difference between the distances of both paths is still at most $8$.
\smallskip

The invariants guarantee that the strategy can be played by Pusher as long as there is no chip on vertex $0$ on any path.
Hence, it is a winning strategy for Pusher.
\end{proof}

\subsection{$1$-restricted game}

Recall the definition of the Fibonacci sequence $(F_j)_{j\in\Nat}$:
$$\begin{aligned}
  F_0 &= 0,\\
  F_1 &= 1,\\
  F_{j+2} &= F_{j+1} + F_{j}.
\end{aligned}$$

\begin{proposition}
Remover has a winning strategy in a $1$-restricted $(k,\floor{\frac{F_{2k+1}-1}{2}})$ chip game.
\end{proposition}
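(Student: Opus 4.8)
The plan is to exhibit an explicit Remover strategy together with a weight function certifying that it works. Assign to a chip sitting on vertex $i$ of either path the weight $F_{2(k-i)+1}$; thus a chip on the starting vertex $k$ has weight $F_1 = 1$, the weight of a chip roughly multiplies by $\phi^2$ with each step forward, and a chip that ever reaches vertex $0$ has weight $F_{2k+1}$. In the starting configuration there are $2\floor{\frac{F_{2k+1}-1}{2}} \le F_{2k+1}-1$ chips, each of weight $1$, so the total weight on the board starts at most $F_{2k+1}-1$, strictly below the weight of a single chip on the winning vertex. I will have Remover keep the total weight at most $F_{2k+1}-1$ throughout the game; since at least one chip leaves the board in every round the game lasts at most $2N$ rounds, so this guarantees that no chip ever reaches vertex $0$ and hence that Remover wins.

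Keeping the weight bounded cannot be done by a purely local rule: if Pusher advances one chip from vertex $i$ to vertex $i-1$ on each of the two paths, then whichever path Remover clears, the surviving chip picks up weight $F_{2(k-i)+3} - 2F_{2(k-i)+1} = F_{2(k-i)}$ more than Remover removes, and this is positive whenever $i < k$. So the strategy has to carry an additional structural invariant whose purpose is to forbid Pusher from profiting from such ``symmetric'' moves too often. The invariant I would maintain bounds, for each path separately and in a Fibonacci fashion in the vertex index, how many chips may sit on each vertex, and couples the two paths by forbidding them to be simultaneously ``heavy'' near the front; the point is that right after a symmetric advance the path whose chip survives has been stripped near the front, and — crucially, because the game is $1$-restricted, so Pusher moves at most one chip per path in a round, and in particular cannot merge or duplicate front chips — Pusher needs several rounds to rebuild a symmetric threat there, during which Remover recoups the weight she lost.

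Remover's strategy is then, roughly: in response to Pusher's move, clear the path on which the pushed chip landed closer to vertex $0$ (with ties and degenerate one-path moves broken in Remover's favour), except that when clearing the other path is what is needed to repair the structural invariant, Remover does that instead. The verification consists of: checking that a chip on vertex $0$ forces the total weight to be at least $F_{2k+1}$; checking that the starting configuration satisfies both the weight bound and the structural invariant; and — the bulk of the work — checking that for every Pusher move Remover has a reply that restores the structural invariant while keeping the total weight at most $F_{2k+1}-1$. The last point is a case analysis on whether Pusher pushes on one or both paths and, in the latter case, on the positions of the two pushed chips relative to each other and to the chips already near the front; the borderline case is the symmetric advance above, and it is there that the exact Fibonacci thresholds in the structural invariant get forced.

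I expect the real obstacle to be pinning down the structural invariant so that it is simultaneously weak enough to hold at the start precisely when $2N \le F_{2k+1}-1$, strong enough to stop the weighted potential from leaking out through a symmetric advance, and robust enough to reproduce itself under Remover's play. The remaining bookkeeping (the precise Fibonacci index shifts, the tie-breaking conventions, the handling of the reserve on vertex $k$) is routine once the invariant is fixed. An alternative route would be induction on $k$, splitting off vertex $k$ and feeding a $(k-1,\cdot)$-substrategy with the chips Pusher moves off it, but this meets the same difficulty — coordinating the behaviour of the two paths — in a different guise.
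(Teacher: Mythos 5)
Your write-up correctly sets up the potential-function framework and correctly identifies where the naive version breaks: with the symmetric weights $F_{2(k-i)+1}$ on both paths, a simultaneous advance of one chip from vertex $i$ on each path leaks $F_{2(k-i)}$ of potential no matter which path Remover clears. But at that point the proof stops being a proof: the ``structural invariant'' that is supposed to absorb this leak is never defined, the case analysis that is supposed to restore it is never carried out, and you yourself flag pinning down the invariant as the real open obstacle. Since the entire content of the proposition lives exactly there, what you have is a plan with the central step missing, not an argument one could check.

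The gap is also unnecessary: the fix is not an auxiliary combinatorial invariant but a broken symmetry in the weight function itself. Give a chip on vertex $i$ of the first path weight $F_{2(k-i)+2}$ and on the second path weight $F_{2(k-i)+1}$, and let Remover always delete the moved chip sitting on the lower position, breaking ties in favour of the first path. If the surviving chip's weight goes from $F_j$ to $F_{j+2}$, the deleted chip sat on a position at most that of the survivor (strictly lower, or equal with the favourable parity), so its weight was at least $F_{j+1}$; by the Fibonacci recurrence the total weight never increases. It starts at $2\floor{\frac{F_{2k+1}-1}{2}}\le F_{2k+1}-1$, while any chip on vertex $0$ would weigh at least $F_{2k+1}$, so no chip ever gets there and Remover wins. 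The interleaved even/odd indexing makes the purely local rule you tried first actually work, including in the tie case, with no multi-round ``recouping'' argument and no extra invariant; your symmetric-weight route could perhaps be completed along the lines you sketch, but as submitted it does not establish the statement.
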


\begin{proof}
We present a winning strategy for Remover that works under the assumption that in each round Pusher moves exactly one chip on each path.
Remover responds by removing the one of the two moved chips that is on a lower position.
In case of a draw he removes the one from the first path.

We assign weight $F_{2(k-i)+2}$ to a chip on the $i$-th vertex of the first path, and weight $F_{2(k-i)+1}$ to a chip on the $i$-th vertex of the second path.
In the starting position, each chip is of weight $1$ and the total weight of all chips is smaller than $F_{2k+1}$.

Let us examine the change of the total weight in one round.
Assume that a chip is moved one step forward so that its weight changes from $F_j$ to $F_{j+2}$.
Then, the removed chip had weight at least $F_{j+1}$.
Altogether, the total weight of all chips on one path increases by $F_{j+2}-F_j$ and decreases by at least $F_{j+1}$ on the second path.
Hence, by the Fibonacci recurrence, the total weight of all chips does not increase during the game and it is always smaller than $F_{2k+1}$.
As a result, no chip can get to vertex $0$ since its weight would be at least $F_{2k+1}$.
\end{proof}

\begin{proposition}
\label{prop:1-restricted-upper}
Pusher has a winning strategy in a $1$-restricted $(k,F_{2k}+2k F_{k+1})$ chip game.
\end{proposition}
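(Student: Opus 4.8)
The plan is to give an explicit strategy for Pusher and to analyze it with a Fibonacci potential dual to the one just used for Remover. Keep exactly Remover's weights: a chip at vertex $i$ of the first path has weight $F_{2(k-i)+2}$, a chip at vertex $i$ of the second path has weight $F_{2(k-i)+1}$, so that the starting board weight is $2N$ and a chip on vertex $0$ would carry weight at least $F_{2k+1}$. Since the game is finite it must end, either with a chip on vertex $0$ (Pusher wins) or with an empty board, so it suffices for Pusher to play so that the total board weight always stays positive. Pusher's elementary move is to push the frontmost (lowest--index) chip on each path; a one--line computation with the recurrence $F_{n+2}=F_{n+1}+F_n$ shows that this move is weight--neutral no matter which moved chip Remover deletes, provided the two fronts stay within a controlled window of each other --- concretely, the first path's front sitting at the same vertex as the second path's front, or one vertex higher.

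The difficulty, and the reason this raw potential does not already win with $N=1$, is keeping that window intact. When Remover deletes the frontmost chip of a path that holds only one chip at that vertex, the front jumps far back, the window is destroyed, and Remover can then drain weight off the other path. The natural fix is to have Pusher carry a reserve just behind each front: maintain a staircase--type invariant in which each path holds a short block of chips at consecutive vertices with Fibonacci--distributed multiplicities ($\Theta(F_{2k})$ chips in total across the staircase), so that a deleted front chip can always be replaced by pushing one chip forward from the adjacent vertex. Because Pusher may move only one chip per path per round, rebuilding the staircase after a deletion takes several rounds, and this is precisely what turns the ``doubling'' of the $(k,8\cdot 2^{k})$ proof into a slower, Fibonacci--paced refilling. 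So the strategy breaks into phases, one per step the combined front advances, with a single ``running chip'' pushed alone while the staircase recovers --- structurally the same phased setup as for the $(k,8\cdot 2^{k})$ proposition, with a ``distance'' for each path (front vertex together with the weight on it), a weight invariant, and a bounded--difference invariant between the two distances.

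The accounting then splits in two, as it does there. Inside a phase the Fibonacci recurrence makes the weight change telescope to zero or better, so the potential never falls below the level carried by the staircase; this is where the $F_{2k}$ term comes from. The only thing that can stop the strategy is exhausting the reserve, and the point is that the combined front advances at most $2k$ times over the whole game and each such advance costs Pusher at most $F_{k+1}$ reserve chips that Remover can destroy; budgeting this loss by $2kF_{k+1}$ and starting with that many chips on top of the staircase, Pusher never gets stuck and the potential stays positive. I expect the main obstacle to be exactly this per--phase analysis: pinning down the right staircase profile, proving that single--chip moves really do let Pusher rebuild it and restore the front window after every possible Remover response, and checking the two per--phase bounds (weight change $\ge 0$ and reserve loss at most $F_{k+1}$). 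That is the direct but noticeably more delicate analogue of the case analysis behind the $(k,8\cdot 2^{k})$ proposition, the extra work coming entirely from the one--chip--per--path--per--round restriction.
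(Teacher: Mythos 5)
Your setup is only the easy part of the argument, and the proof's actual content is missing. The one correct and checkable ingredient is the ``window'' computation: with the dual weights $F_{2(k-i)+2}$, $F_{2(k-i)+1}$, pushing both front chips loses no weight whichever of the two Remover deletes, provided the two fronts differ by at most one vertex in the right direction. Everything that makes the proposition true is then deferred to ``obstacles you expect'': you never specify the staircase profile, never show that Pusher, moving only one chip per path per round, can restore the window after every Remover response (note that Remover can delete the very replacement chips you push up, and when a front retreats the window breaks and weight can leak, so the fronts are not monotone and your ``at most $2k$ advances'' claim is unjustified), and never prove either per-phase bound. The two budget figures --- weight loss $0$ per phase ``from the $F_{2k}$ staircase'' and at most $F_{k+1}$ reserve chips per advance --- are read off from the statement $F_{2k}+2kF_{k+1}$ rather than derived; as written there is no invariant from which they would follow. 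A plan whose stated main difficulty is exactly the part left unproved is a gap, not a proof.

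For comparison, the paper avoids front-tracking altogether. Pusher organizes chips into $2k-2$ \emph{towers} (an $(a,b)$-tower has $a$ chips on the top $a$ vertices of one path and $b$ on the top $b$ of the other) plus two buckets of spare chips, and repeatedly applies one of three actions: rebuild a one-sided tower from a bucket, exchange two same-size towers of different shape, or advance using two identical towers of maximal size, sacrificing one moved chip and transferring the survivor. A pigeonhole count ($2k-2$ towers, only $2k-3$ admissible sizes) shows some action is always available, and the potential $\sum_{\text{towers}} F_{\text{size}}$ is preserved by advance ($2F_s=F_{s+1}+F_{s-2}$), increased by exchange and by each rebuild; once it exceeds $K=(2k-2)F_{k+1}+F_{k+2}+\cdots+F_{2k-2}$ no rebuild can recur, which bounds the bucket consumption and yields $K+1\le F_{2k}+2kF_{k+1}$. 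That bookkeeping --- the replacement for your unproved staircase maintenance and per-advance budget --- is precisely what your proposal would still have to invent, and it is not clear the front-window scheme can be completed at this chip count; if you want to pursue your route, the burden is to exhibit the invariant and the refill procedure explicitly and verify the two bounds against an adversarial Remover.
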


\newcommand{\TW}{\mathcal{W}}
\begin{proof}
We split the chips into two types: some of them will be organized in groups called \emph{towers}, while the rest will be kept in two \emph{buckets}.
Pusher will move only the tower chips, and the bucket ones will be transferred to towers when necessary.
Each path has its own bucket.
An \emph{$(a,b)$-tower} (with $a, b \geq 0$) consists of $a$ chips on the first path on vertices $k, k-1, \ldots, k-(a-1)$ and $b$ chips on the second path on vertices $k, k-1, \ldots,k-(b-1)$.
The \emph{size} of an $(a,b)$-tower is $a+b$.
We remark that towers of size $0$ or $1$ (and, in general, any $(a,0)$- or $(0,b)$-towers) will appear only briefly.
Observe that if at any moment a $(k,a)$- or $(b,k)$-tower appears, Pusher wins the game.
In particular, when a tower of size $2k-1$ or bigger appears, Pusher wins the game.

We start the game with $2k-2$ $(1,1)$-towers.
The Pusher's strategy uses three types of actions.
\smallskip

\noindent \textbf{Rebuild:}
If there is an $(a,0)$- or $(0,b)$-tower, Pusher takes a chip from the corresponding bucket and transfers it to the tower, changing the tower to $(a,1)$- or $(1,b)$-tower.
\smallskip

\noindent \textbf{Exchange:}
If no rebuild is possible and if there are two towers of equal size $s$, an $(a,b)$-tower and an $(a',b')$-tower with $s = a+b = a'+b'$ and $a \neq a'$, then Pusher reorganizes them into an $(a,b')$- and $(a',b)$-tower.
Observe that this action replaces two towers of size $s$ each with a tower of size $s+i$ and another one of size $s-i$ for some positive $i$.
\smallskip

\noindent \textbf{Advance:}
If no rebuild and no exchange is possible and if there are two $(a,b)$-towers for some $a,b > 0$, then Pusher makes an actual move.
He selects two $(a,b)$-towers with maximum possible size $s=a+b$.
Then, he moves forward two top-most chips from the first tower: a chip from vertex $k-(a-1)$ on the first path and a chip from vertex $k-(b-1)$ on the second path.
One of the chips is removed by Remover.
Pusher transfers the second chip to the second tower.
As a result, two towers of size $s$ are replaced by a tower of size $s-2$ and another one of size $s+1$.
\smallskip

We prove that as long as the game continues, one of these actions is always possible.
Suppose to the contrary that no action is possible.
Since rebuild is impossible, every tower has positive number of chips on both paths.
Since Pusher didn't win already, all towers have sizes between $2$ and $2k-2$.
Since exchange is impossible, every two towers of the same size have the same structure.
But if Pusher cannot advance as well, there is at most one tower of any size.
There are $2k-2$ towers and only $2k-3$ possible sizes, which gives a contradiction.

Finally, we bound the total number of chips transferred from the buckets during the whole game.
We assign weight $F_s$ to every tower of size $s$.
Let $\TW$ denote the total weight of all towers.
Observe that every rebuild action increases $\TW$ by at least $1$.
The other two actions do not decrease $\TW$.
During exchange, the total weight increases, as $2 F_s < F_{s+i} + F_{s-i}$.
During advance, the total weight does not change, as $2 F_s = F_{s+1} + F_{s-2}$.

Let $K = (2k-2)F_{k+1} + F_{k+2} + \ldots + F_{2k-2}$.
We prove that when $\TW$ exceeds $K$, Pusher does not use rebuild actions anymore.
First observe that when $\TW> K$, then there are two towers of equal size $s \geq k+2$.
Otherwise, the total weight of towers of size at least $k+2$ would be at most $F_{k+2} + \ldots + F_{2k-2}$ and towers of smaller size cannot contribute more than $(2k-2) \cdot F_{k+1}$ to the total weight.
A rebuild action never follows after an exchange action.
Any advance action that uses towers of size at least $k+2$ creates towers of size at least $k$.
Such towers must, however, have chips on both sides, or Pusher immediately wins the game.
Hence, if $\TW$ is greater than $K$ then no more chips are transferred from the buckets.

Every chip transferred from a bucket increases $\TW$ by at least $1$.
By the above discussion, Pusher wins if each bucket initially contains $K+1-(2k-2)$ chips.
It is equivalent to starting the game with $K+1$ chips on each path, as initially in all towers there are altogether $2k-2$ chips on each path.
Therefore, Pusher has a winning strategy in a $(k,K +1)$-game.
Since $F_1 + F_2 + \ldots + F_{2k-2} = F_{2k}-1$, we get
\[
K + 1= (2k-2)F_{k+1} + F_{k+2} + \ldots + F_{2k-2} + 1 
\leq F_{2k} + 2k F_{k+1}.
\]
\end{proof}

The above two results establish the threshold value for $1$-restricted chip game up to a multiplicative factor.
We conjecture that the exact value of the threshold for $1$-restricted chip games is $t_1(k) = F_{2k}$.
Our modified strategy for Pusher that works in Maker-Maker-Braker setting is very close to meeting that threshold.
We have conducted some computer experiments that support this conjecture.

\bibliographystyle{plain}
\bibliography{paintability}

\begin{thebibliography}{10}

\bibitem{Alon93}
Noga Alon.
\newblock Restricted colorings of graphs.
\newblock In {\em Surveys in combinatorics}, volume 187 of {\em London
  Mathematical Society Lecture Notes}, pages 1--33, 1993.

\bibitem{AsDha93}
Javed~A. Aslam and Aditi Dhagat.
\newblock On-line algorithms for {$2$}-coloring hypergraphs via chip games.
\newblock {\em Theoretical Computer Science}, 112(2):355--369, 1993.

\bibitem{Erd1963}
Paul Erd{\H{o}}s.
\newblock On a combinatorial problem.
\newblock {\em Nordisk Matematisk Tidskrift}, 11:5--10, 40, 1963.

\bibitem{Erd1964}
Paul Erd{\H{o}}s.
\newblock On a combinatorial problem. {II}.
\newblock {\em Acta Mathematica Academiae Scientiarum Hungaricae}, 15:445--447,
  1964.

\bibitem{ErdHaj61}
Paul Erd{\H{o}}s and Andr\'as Hajnal.
\newblock On a property of families of sets.
\newblock {\em Acta Mathematica Academiae Scientiarum Hungaricae}, 12:87--123,
  1961.

\bibitem{ERT80}
Paul Erd{\H{o}}s, Arthur~L. Rubin, and Herbert Taylor.
\newblock Choosability in graphs.
\newblock In {\em Proceedings of {W}est {C}oast {C}onference on
  {C}ombinatorics, {G}raph {T}heory and {C}omputing 1979}, volume XXVI of {\em
  Congressus Numerantium}, pages 125--157, 1980.

\bibitem{Gebauer13}
Heidi Gebauer.
\newblock On the construction of {$3$}-chromatic hypergraphs with few edges.
\newblock {\em Journal of Combinatorial Theory. Series A}, 120(7):1483--1490,
  2013.

\bibitem{PegdenPC}
Wesley Pegden.
\newblock private communication, 2013.

\bibitem{RS00}
Jaikumar Radhakrishnan and Aravind Srinivasan.
\newblock Improved bounds and algorithms for hypergraph {$2$}-coloring.
\newblock {\em Random Structures {\&} Algorithms}, 16(1):4--32, 2000.

\bibitem{Schauz09}
Uwe Schauz.
\newblock {Mr.\ P}aint and {Mrs.\ C}orrect.
\newblock {\em Electronic Journal of Combinatorics}, 16(1):Paper {\#R77}, 18
  pages, 2009.

\bibitem{Schauz10}
Uwe Schauz.
\newblock A paintability version of the combinatorial nullstellensatz, and list
  colorings of {$k$}-partite {$k$}-uniform hypergraphs.
\newblock {\em Electronic Journal of Combinatorics}, 17:Paper {\#R176}, 18
  pages, 2010.

\bibitem{Zhu09}
Xuding Zhu.
\newblock On-line list colouring of graphs.
\newblock {\em Electronic Journal of Combinatorics}, 16(1):Paper {\#R127}, 16
  pages, 2009.

\end{thebibliography}
\end{document}